\documentclass[12pt,a4paper]{article}
\usepackage[utf8]{inputenc}
\usepackage[T1]{fontenc}
\usepackage{amsmath}
\usepackage{amssymb}
\usepackage{amsthm}
\usepackage{graphicx}
\usepackage{mathrsfs}

\newtheorem{remark}{Remark}[section]
\newtheorem{theorem}{Theorem}[section]
\newtheorem{lemma}{Lemma}[section]
\newtheorem{definition}{Definition}[section]
\title{ Associating hypergraphs defined on loops}
\author {{Siddharth Malviy and Vipul Kakkar}\\ 
\vspace{0.1cm} Central University of Rajasthan \\Ajmer, India \\
Email:  malviysiddharth@gmail.com, vplkakkar@gmail.com }

\date{}
\begin{document}
\maketitle
\noindent \textbf{{Abstract.}}
	In this paper, we define a new hypergraph $\mathcal{H(V,E)}$ on a loop $L$, where $\mathcal{V}$ is the set of points of the loop $L$ and $\mathcal{E}$ is the set of hyperedges $e=\{x,y,z\}$ such that $x,y$ and $z$ associate in the order they are written. We call this hypergraph as the associating hypergraph on a loop $L$. We  study certain properites of associating hypergraphs on the Moufang loop $M(D_n,2)$, where $D_n$ denotes the dihedral group of order $2n.$\\
	
	\noindent \textbf{{Keywords.}}  Uniform Hypergraph, Moufang Loop $M(G,2)$, Independence Number, Chromatic Number, Matching Polynomial  \\

 \noindent \textbf{2020 MSC.} 05C65, 05C15, 05C07, 05C12, 05C69, 05C70

	\section{Introduction}

 Graphs are used in various domains. A lot of development and research on graphs have been done during the first half of twentieth century, but it is still an active field of research and applications. Hypergraphs is a generalization of the graphs. For more information one can see \cite{berge}. In \cite{bela}, extremal hypergraphs problems have been discussed.  In \cite{hamiltonian chain} and \cite{loose hamilton}, the condition for a hamiltonian chain in hypergraphs; in \cite{chromatic polynomial}, chromatic polynomial of hypergraphs; in \cite{hardness}, the hardness of $3$-uniform hypergraph coloring; in \cite{partioning}, partition of $3$-uniform hypergraphs; in \cite{eulerian}, eulerain properties of hyperegraphs; in \cite{linear cycle}, linear cycles in $3$-uniform hypergraphs; in \cite{matching polynomial}, matching polynomial of hypergraphs; in \cite{spectral properties}, spectral properties of uniform hypergraphs; in \cite{prime}, modules and prime of 3-uniform hypergraphs and in \cite{genus}, genus of complete $3$-uniform hypergraphs have been studied.

\noindent In $2024$, P J Cameron et al. defined the hypergraphs for algebraic structures, especially groups \cite{groups}. There has not been so much work done in hypergraphs defined on algebraic structures. In this paper, we define a new hypergraph on a loop. This is an initiative of a study of hypergraphs on non-associative structures.

\noindent A hypergraph $H=(V,E)$ is an ordered pair, where $V$ is a set of vertices and $E$ is a family of subsets of $V$. A hypergraph is $k$-uniform if every hyperedge include exactly $k$-vertices. A magma $(\mathrm{Q}, \cdot)$ is a non-empty set $\mathrm{Q}$ with a binary operation $\cdot$ on
$\mathrm{Q}$. A magma $(\mathrm{Q}, \cdot)$ is a quasigroup if for any $x, z \in \mathrm{Q}$ there is a unique $y$ such
that $x \cdot y = z$, and for any $y, z \in \mathrm{Q}$ there is a unique $x$ such that $x \cdot y = z$.  A quasigroup $(\mathrm{Q}, \cdot)$ is a loop if there is a neutral element $1 \in \mathrm{Q}$ such
that $1 \cdot x = x \cdot 1 = x$ for all $x \in \mathrm{Q}$. Elements in a loop need not satisfy the associative law.
 
\noindent Although graphs are limited to supporting pairwise interactions, hypergraphs are able to maintain multi-adic links, making them an ideal tool for modeling collabration networks and other scenarios. They have so many appplications in wide range of field. Hypergraph theory is used in psychology, genetics, to solve optimization problems and so on. Cellular mobile communication networks can be modeled using a hypergraph theory. In chemistry, the concept of molecular hyperegraph is generalization of molecular graph (see\cite{chemistry}). In biology, hypergraph theory is also used (see \cite{biology}). Hypergraph spectral clustering, which extends spectral graph theory with hypergraph Laplacian, is an example of a representative hypergraph learning technique (see \cite{spectral hypergraph}). The theory of hypergraphs  is given in data mining (see \cite{data mining}), image procesing ( see \cite{image}), matching learning (see \cite{machine learning}) and so on ( see \cite{machine}, \cite{geometry}).

 \noindent Throughout the article, $\mathbb{Z}_n$ will denote the cyclic group of order $n$. A hypergrpah $\mathcal{H(V,E)}$ will denote the associating hypergraph, where $\mathcal{V}$ is the set of points of loop and $\mathcal{E}$ is the set of hyperedges. A hyperedge consist of three elements when three elements say $x,y$ and $z$ will associative in the order they are written.

\noindent This article is structured as follows. In section $2$, we will study some preliminaries related to hypergraphs. In section $3$ and $4$, we will study associating hypergraphs for Moufang loop $M(D_n,2)$, where $D_n$ is the dihedral group of order $2n$.

 \section{Preliminaries}

 In this section, we will study some basic terms and definition that we have used in this paper. Some of them is a generalization of the definitions used in graph theory.







\begin{definition}
    \cite{bretto}  An edge $e$ is an empty edge if $e= \phi$, and a vertex $v$ is isolated vertex if $E(v)=\phi$.
\end{definition}

\begin{definition}
  \cite{bretto}  A hypergraph coloring is an assignment of colors to the vertices such that no edge has all vertices of the same color.
\end{definition}

\begin{definition}
   \cite{bretto} The chromatic number $\psi(H)$ of a hypergraph $H$ is the minimum number of distinct colors needed to color $H.$
\end{definition}

\begin{definition}
  \cite{bretto}  A strong coloring is a coloring in which vertices that share same hyperedge are assigned different colors.
\end{definition}

\begin{definition}
  \cite{bretto} The strong chromatic number $\overline{\psi}(H)$ of a hypergraph $H$ is the minimum number of distinct colors needed to strongly color $H.$
\end{definition}

\begin{definition}
  \cite{bretto}  A matching is a set of hyperedges such that no two hyperedges in the matching share a common vertex. 
\end{definition}

\begin{definition}
  \cite{bretto}  The matching number $\upsilon(H)$ of a hypergraph $H$ is the size of the largest matching in the hypergraph.
\end{definition}

\begin{definition}
   \cite{bretto} The covering number $\rho(H)$ of a hypergraph $H$ is the minimum number of hyperedges, that is required to ensure that every vertex is included atleast once. 
\end{definition}


\begin{definition}
    \cite{bretto} The independece set is a set of vertices that does not contain an edge as a subset. The independece number $\alpha(H)$ of a hypergraph $H$ is the size of largest independence set.
\end{definition}

\begin{definition}
    \cite{bretto}  A set of vertices, $T$ is a transversal if for every edge in hypergraph, there is a vertex in $T$ that is incident to that edge. The transversal number $\tau(H)$ of the hypergraph $H$ is the size of smallest transversal set.
\end{definition}

\begin{lemma} \label{trans property}
   \cite{formula} For the hypergraph $H(V,E)$ without empty edges, $\alpha(H)+\tau(H)= \mid V \mid.$
 \end{lemma}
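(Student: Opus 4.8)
The plan is to prove the complementary relationship $\alpha(H) + \tau(H) = |V|$ by establishing a bijective correspondence between maximum independent sets and minimum transversals. The key observation is that the defining conditions for independent sets and transversals are set-theoretic complements of one another. Specifically, a set $T \subseteq V$ is a transversal if and only if every edge $e \in E$ satisfies $e \cap T \neq \phi$, while a set $S \subseteq V$ is independent if and only if no edge is contained in $S$, i.e., every edge $e$ satisfies $e \not\subseteq S$. First I would show that these two properties are related through complementation: I claim that $S$ is independent if and only if $V \setminus S$ is a transversal.

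To verify this claim, I would argue both directions. Suppose $S$ is independent, so no edge $e$ lies entirely in $S$; then for every edge $e$ there exists a vertex of $e$ lying outside $S$, i.e., in $V \setminus S$, which means $e \cap (V \setminus S) \neq \phi$, so $V \setminus S$ is a transversal. Conversely, if $T = V \setminus S$ is a transversal, then every edge meets $T$, so no edge can be wholly contained in $V \setminus T = S$, making $S$ independent. The hypothesis that $H$ has no empty edges is exactly what makes this equivalence clean: an empty edge $e = \phi$ would be a subset of every $S$ (breaking independence for all sets) yet would intersect no transversal (breaking the transversal condition for all $T$), so excluding empty edges prevents this degeneracy.

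Having established the complementation bijection $S \mapsto V \setminus S$ between independent sets and transversals, the counting step follows. This map is inclusion-reversing: it sends a largest independent set $S$ (of size $\alpha(H)$) to a transversal of size $|V| - \alpha(H)$, and it sends any transversal $T$ to an independent set of size $|V| - |T|$. To finish, I would argue that a maximum independent set corresponds to a minimum transversal: if $S^\ast$ is an independent set of maximum size $\alpha(H)$, then $V \setminus S^\ast$ is a transversal of size $|V| - \alpha(H)$, so $\tau(H) \leq |V| - \alpha(H)$. Conversely, if $T^\ast$ is a minimum transversal of size $\tau(H)$, then $V \setminus T^\ast$ is independent of size $|V| - \tau(H)$, so $\alpha(H) \geq |V| - \tau(H)$, i.e., $\tau(H) \geq |V| - \alpha(H)$. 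Combining the two inequalities yields $\tau(H) = |V| - \alpha(H)$, which rearranges to the desired identity $\alpha(H) + \tau(H) = |V|$.

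I do not anticipate a serious obstacle here, since the result is a standard duality; the only subtle point is the role of the no-empty-edge hypothesis, and I would make sure to state explicitly where it is used so that the equivalence between independence and the transversal complement is genuinely an if-and-only-if. The argument is essentially combinatorial bookkeeping once the complementation bijection is in hand.
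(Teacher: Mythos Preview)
Your argument is correct and is precisely the standard complementation proof of this well-known duality. Note, however, that the paper does not supply its own proof of this lemma: it is stated with a citation to Henning and Yeo (reference~\cite{formula}) and used as a black box. There is therefore nothing in the paper to compare your proof against; your write-up simply fills in what the paper takes for granted, and it does so cleanly, including the explicit identification of where the no-empty-edge hypothesis is needed.
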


\begin{definition} \label{Dv matrix}
   \cite{Dv De} For a hypergraph $H$, where $V$ is the set of vertices and E is the set of hyperedges, $D_v=(D_v(i,j))$ is a vertex-degree matrix with entries defined as
    $$D_v(i,j)= \begin{cases}
        deg(v_i) & i=j\\
        0 & i \neq j
    \end{cases},$$ where $deg(v_i)$ is the number of hyperedges containing vertex $v_i.$ The order of the $D_v$
matrix is $\mid V \mid  \times \mid V \mid.$
\end{definition}

\begin{definition} \label{De matrix}
    \cite{Dv De} For a hypergraph $H$, where $V$ is the set of vertices and E is the set of hyperedges, the hyperedge-degree matrix $D_e$ is a diagonal matrix, where each diagonal entry shows the size of a hyperedge. The order of the $D_e$
matrix is $\mid E \mid  \times \mid E \mid.$
\end{definition}

\begin{definition}
    \cite{JAR} The length of the shortest path that connects two vertices $u$ and $v$ in a hypergraph represents the distance $d(u,v)$ between them.
\end{definition}





\section{Associating hypergraphs on loops}

Let $L$ be a loop. Let   $x,y,z \in L$. If $$(x \cdot y) \cdot z = x \cdot ( y \cdot z),$$
then we say that $x,y$ and $z$ associate in the order they are written. It may happen that the three elements $x,y$ and $z$ associate in an order but they do not associate in other order.

\noindent For example, consider an example of the loop of order $5$ (see \cite[p.22, 1.78]{quasigroup book})\\

 \begin{center}
\begin{tabular}{c|ccccc}
     * & 0 & 1 & 2 & 3 & 4  \\ \hline
     0 & 0 & 1 & 2 & 3 & 4  \\
     1 & 1 & 0 & 3 & 4 & 2 \\
     2 & 2 & 4 & 0 & 1 & 3\\
     3 & 3 & 2 & 4 & 0 & 1\\
     4 & 4 & 3 & 1 & 2 & 0
 \end{tabular}
 \end{center}


\noindent Here, 
$$ (1* 2) * 3=  1*( 2 * 3),$$

\noindent  but 
$$ (1 * 3) * 2 \neq 1 *( 3 * 2).$$

This prompt us to define a hypergraph $\mathcal{H(V,E)}$ as follows:

\noindent Let the set of vertices $\mathcal{V}$ is the set of points of a loop $L$. The three distinct elements $x,y,z \in \mathcal{V}$ form an hyperedge if $x \cdot(y \cdot z) = (x \cdot y) \cdot z.$ We call this hypergraph as {associating hypergraph}. One can observe that the associating hypergraph on a loop is a $3$-uniform directed hypergraph.\\




\noindent A loop $(M, \cdot)$ is a Moufang loop if it satisfies any of the three (equivalent) Moufang identities\cite[2.1, p.1]{Moufang loop}:

\begin{enumerate} 
    \item $(x \cdot y) \cdot (z \cdot x) = (x \cdot (y \cdot z)) \cdot x$
    \item  $x \cdot (y \cdot (z \cdot y)) = ((x \cdot y) \cdot z) \cdot y$
    \item $x \cdot (y \cdot (x \cdot z)) = ((x \cdot y) \cdot x) \cdot z$
\end{enumerate}

\noindent Let $ (G, \cdot)$ be a non-abelian group and $\mathbb{Z}_2$ be the cyclic group of order $2$. Then we consider a set $M = \{(g, \alpha) | g \in G, \alpha \in \mathbb{Z}_2\}$. Define $\circ$ on $M$ as 
\begin{equation}\label{equation-1}
    (g_1, \alpha_1)\circ(g_2, \alpha_2) =(g_1 ^{1-\alpha_2}\cdot g_2 ^{
(-1)^{\alpha_1}} \cdot g_1 ^{\alpha_2}, \alpha_1 + \alpha_2).
\end{equation}
Then $(M, \circ)$ is a Moufang loop. We denote this Moufang loop by $M(G,2).$  Moufang loop $M(G,2)$ is not associative if and only if $G$ is not abelian group. For more information one can see \cite{Moufang loop}.
We will study the associating hypergraph for Moufang loop $M(D_n,2)$ on dihedral group $D_n(n \geq 3),$ where \[D_n= \langle x,y \mid x^n=y^2=1, xy=yx^{-1} \rangle.\]

\noindent Let $(g_1, \alpha_1), (g_2, \alpha_2), (g_3, \alpha_3) \in M(D_n,2).$ To find which three elements of $M(G,2)$ associates, we have to consider the following eight cases
\begin{align*}
    (\alpha_1, \alpha_2, \alpha_3) \in &\{(0,0,0),(0,1,0),(0,0,1),(0,1,1),\\& (1,0,1),(1,1,0),(1,0,0),(1,1,1)\}.
\end{align*}

 \noindent Let $Z(D_n)$ denote the center of dihedral group $D_n.$ Let $R=\{x,x^2,\hdots, x^n\}\setminus Z(D_n)$ and $S=\{y,xy,x^2y,\hdots,x^{n-1}y\}.$ Let $^nC_r$ denote $\frac{n!}{ r! \times (n-r)!}$, where $n,r \in \mathbb{N}.$\\



\noindent\textbf{Case 1.}  Let $(\alpha_1, \alpha_2, \alpha_3)=(0,0,0).$ Then all three elements are from the group $G$. Therefore, the total number of hyperedges from this case is  $6\times(^{2n}C_3).$\\

\noindent\textbf{Case 2.} Let $(\alpha_1, \alpha_2, \alpha_3)=(0,0,1).$ Since $(g_1,0) \neq (g_2,0)$, $g_1 \neq g_2$. \vspace{0.1cm}\\
Note that $$((g_1, 0)\circ (g_2, 0)) \circ (g_3,1)= (g_3 g_1 g_2,1)$$ and $$(g_1, 0)\circ ((g_2, 0) \circ (g_3,1))= (g_3 g_2 g_1,1).$$

\noindent Therefore, $(g_1,0), (g_2,0)$ and $(g_3,1)$ associate if and only if $$g_1g_2= g_2 g_1.$$

\noindent  First, assume that $n$ is odd. The elements $g_1$ and $g_2$ commute if and only if either $g_1, g_2 \in R$ or any one of $g_1$ or $g_2$ is in $Z(D_n).$

\noindent Therefore, the total number of hyperedges from this subcase is $$ ^{n-1}C_1 \times ^{n-2}C_1 \times ^{2n}C_1 + 2 \cdot(^{1}C_1 \times ^{2n-1}C_1 \times ^{2n}C_1)$$ $$=[2n(n-1)(n-2)] + [4n(2n-1)].$$

\noindent  Now, assume $n$ is even.  The elements $g_1$ and $g_2$ commute if and only if one of the following holds

\begin{itemize}
    \item $g_1, g_2 \in R.$
    \item Either of  $g_1$ or $g_2$ is in $Z(D_n)$ but not both.
 \item Both  of  $g_1$ and  $g_2$  are in $Z(D_n)$.
    \item  Both $g_1, g_2 \in S$ such that $g_1 g_2= g_2 g_1.$
\end{itemize}

\noindent Therefore, the total number of hyperedges from this subcase is $$ ^{n-2}C_1 \times ^{n-3}C_1 \times ^{2n}C_1 + 4 \cdot(^{1}C_1 \times ^{2n-1}C_1 \times ^{2n}C_1) + 2 \times ^{2n}C_1 + n \times ^{2n}C_1$$ $$=[2n(n-2)(n-3)]+ [8n(2n-1)]+4n+ 2n^2.$$

\noindent\textbf{Case 3.} Let $(\alpha_1, \alpha_2, \alpha_3)=(0,1,0).$ Since $(g_1,0) \neq (g_3,0)$, $g_1 \neq g_3$. \vspace{0.1cm}\\
Note that $$((g_1, 0)\circ (g_2, 1)) \circ (g_3,0)= (g_2 g_1 g_3 ^{-1},1)$$ and $$(g_1, 0)\circ ((g_2, 1) \circ (g_3,0))= (g_2 g_2 ^{-1} g_1,1).$$

\noindent Therefore, $(g_1,0), (g_2,1)$ and $(g_3,0)$ associate if and only if $$g_1g_3^{-1}= g_3^{-1} g_1 \Leftrightarrow g_1g_3= g_3g_1.$$

\noindent By the similar arguments as in Case 2, the total number of hyperedges is 
$$ ^{n-1}C_1 \times ^{n-2}C_1 \times ^{2n}C_1 + 2 \cdot(^{1}C_1 \times ^{2n-1}C_1 \times ^{2n}C_1)$$ $$=[2n(n-1)(n-2)] + [4n(2n-1)],$$ when $n$ is odd.
$$ ^{n-2}C_1 \times ^{n-3}C_1 \times ^{2n}C_1 + 4 \cdot(^{1}C_1 \times ^{2n-1}C_1 \times ^{2n}C_1) + 2 \times ^{2n}C_1 + n \times ^{2n}C_1$$ $$=[2n(n-2)(n-3)]+ [8n(2n-1)]+4n+ 2n^2,$$ when $n$ is even.\\






\noindent\textbf{Case 4.} Let $(\alpha_1, \alpha_2, \alpha_3)=(1,0,0).$ Since $ (g_2,0) \neq (g_3,0)$, $g_2 \neq g_3$. \vspace{0.1cm}\\
Note that $$((g_1, 1)\circ (g_2, 0)) \circ (g_3,0)= (g_1 g_2^{-1} g_3^{-1},1)$$ and  $$(g_1, 1)\circ ((g_2, 0) \circ (g_3,0))= (g_1 g_3 ^{-1} g_2^{-1},1).$$

\noindent Therefore, $(g_1,1), (g_2,0)$ and $(g_3,0)$ associate if and only if $$g_2^{-1}g_3^{-1}= g_3^{-1} g_2^{-1} \Leftrightarrow g_2g_3= g_3g_2.$$

\noindent By the similar arguments as in Case 2, the total number of hyperedges is 
$$ ^{n-1}C_1 \times ^{n-2}C_1 \times ^{2n}C_1 + 2 \cdot(^{1}C_1 \times ^{2n-1}C_1 \times ^{2n}C_1)$$ $$=[2n(n-1)(n-2)] + [4n(2n-1)],$$ when $n$ is odd.
$$ ^{n-2}C_1 \times ^{n-3}C_1 \times ^{2n}C_1 + 4 \cdot(^{1}C_1 \times ^{2n-1}C_1 \times ^{2n}C_1) + 2 \times ^{2n}C_1 + n \times ^{2n}C_1$$ $$=[2n(n-2)(n-3)]+ [8n(2n-1)]+4n+ 2n^2,$$ when $n$ is even.\\

\noindent\textbf{Case 5.} Let $(\alpha_1, \alpha_2, \alpha_3)=(0,1,1).$ Since $ (g_2,1) \neq (g_3,1)$, $g_2 \neq g_3$. \vspace{0.1cm}\\
Note that $$((g_1, 0)\circ (g_2, 1)) \circ (g_3,1)= ({g_3}^{-1} {g_2} g_1,0)$$ and  $$(g_1, 0)\circ ((g_2, 1) \circ (g_3,1))= ({g_1}^{-1} {g_3}^{-1} g_2 {g_1}^2,0).$$ So, in this case  $(g_1,0), (g_2,1)$ and $(g_3,1)$ associates if and only if $${g_3}^{-1} {g_2} g_1={g_1}^{-1} {g_3}^{-1} g_2 {g_1}^2 \Leftrightarrow g_1 (g_3^{-1} g_2)= (g_3^{-1} g_2) g_1.$$

\noindent Now, if we choose $g_1 \in Z(D_n),$ then $g_1(g_3^{-1} g_2)= (g_3^{-1} g_2)g_1$ for every $g_3^{-1} g_2 \in D_n.$
Therefore, we have $2n \times (2n-1)$ choices to form hyperedges when $n$ is odd and $2 \times 2n \times (2n-1)$ choices to form hyperedges when $n$ is even.\\

\noindent Now, we discuss the choices of hypergraph when $g_1 \in R.$ If we choose $g_1=x,$ then $xg_3^{-1} g_2= g_3^{-1} g_2x.$ Therefore, the choices for $g_3^{-1}g_2 \in \{x,x^2,\hdots,x^{n-1}\}.$ 

\noindent If we choose $g_3^{-1}g_2=x$ then $g_2=g_3 x$. Therefore, we have $2n$ choices for $g_3$ to form hyperedges.
Similar argument holds for other choices of $g_3^{-1}g_2.$ Therefore, for $g_1=x$, we have
$(n-1) \times 2n$ ways to form hyperedges.

\noindent Similarly, for any $g_1 \in R$ other than $x$, we have ($n-1) \times 2n$ ways to form hyperedges. Therefore, total number of hyperedges when $g_1 \in R$ is $(n-1) \times (n-1) \times 2n,$ when $n$ is odd and $(n-2) \times (n-1) \times 2n$, when $n$ is even.\\

\noindent Finally, we discuss the choices of hypergraph when $g_1 \in S.$ Let $g_1=y,$ then $yg_3^{-1} g_2= g_3^{-1} g_2y.$  Therefore, the choices for $g_3^{-1}g_2 \in \{y\}$ when $n$ is odd and $g_3^{-1}g_2 \in \{y, x^{\frac{n}{2}}, x^{\frac{n}{2}}y\}$ when $n$ is even. If we choose $g_3^{-1}g_2=y$ then $g_2=g_3 y$. Therefore, we have $2n$ choices for $g_3$ to form hyperedges. Similar argument holds for other choices ($x^{\frac{n}{2}}, x^{\frac{n}{2}}y$) of $g_3^{-1}g_2$, when $n$ is even. Therefore, total number of hyperedges when $g_1 \in S$ is $n \times 2n,$ when $n$ is odd. And total number of hyperedges when $g_1 \in S$ is $ n \times 3 \times 2n,$ when $n$ is even.\\

 \noindent Hence, total number of hyperedges from this case is:

\noindent  $[2n \times (2n-1)] + [(n-1 )\times (n-1 )\times 2n] + [n \times 2n],$ when $n$ is odd.
 
 \noindent $[2 \times 2n \times (2n-1)] +[( n-2)\times (n-1 )\times 2n] + [n \times 3 \times 2n],$  when $n$ is even.\\

 \noindent\textbf{Case 6.} Let $(\alpha_1, \alpha_2, \alpha_3)=(1,0,1).$ Since $(g_1,1) \neq (g_3,1)$, $g_1 \neq g_3$. \vspace{0.1cm}\\
Note that $$((g_1, 1)\circ (g_2, 0)) \circ (g_3,1)= ({g_3}^{-1} g_1 {g_2}^{-1}, 2)$$ and 
 $$(g_1, 0)\circ ((g_2, 1) \circ (g_3,1))= ({g_2}^{-1} {g_3}^{-1} g_1, 2).$$ So, in this case  $(g_1,1), (g_2,0)$ and $(g_3,1)$ associates if and only if $${g_3}^{-1} g_1 {g_2}^{-1}={g_2}^{-1} {g_3}^{-1} g_1 \Leftrightarrow g_2 (g_3 ^{-1}g_1)= (g_3^{-1} g_1) g_2.$$ 
 
  \noindent By the similar arguments as in Case 5, the total number of hyperedges from this case is:

\noindent  $[2n \times (2n-1)] + [(n-1 )\times (n-1 )\times 2n] + [n \times 2n],$ when $n$ is odd.
 
 \noindent $[2 \times 2n \times (2n-1)] +[( n-2)\times (n-1 )\times 2n] + [n \times 3 \times 2n],$  when $n$ is even.\\
 
 \noindent\textbf{Case 7.} Let $(\alpha_1, \alpha_2, \alpha_3)=(1,1,0).$ Since $(g_1,1) \neq (g_2,1)$, $g_1 \neq g_2$. \vspace{0.1cm}\\
Note that $$((g_1, 1)\circ (g_2, 1)) \circ (g_3,0)= ({g_2}^{-1} g_1 g_3, 2)$$ and
$$(g_1, 1)\circ ((g_2, 1) \circ (g_3,0))= (g_3 {g_2}^{-1} g_1, 2).$$ So, in this case  $(g_1,1), (g_2,1)$ and $(g_3,0)$ associates if and only if $$({g_2}^{-1} g_1) g_3=g_3( {g_2}^{-1} g_1).$$ By the similar arguments as in Case 5, the total number of hyperedges from this case is:

\noindent  $[2n \times (2n-1)] + [(n-1 )\times (n-1 )\times 2n] + [n \times 2n],$ when $n$ is odd.
 
 \noindent $[2 \times 2n \times (2n-1)] +[( n-2)\times (n-1 )\times 2n] + [n \times 3 \times 2n],$  when $n$ is even.\\

\noindent\textbf{Case 8.} Let $(\alpha_1, \alpha_2, \alpha_3)=(1,1,1).$ Since $(g_1,1), (g_2,1)$ and  $(g_3,1)$ are distinct,  $g_1, g_2$ and $g_3$ are distinct.\vspace{0.1cm}\\
Note that $$((g_1, 1)\circ (g_2, 1)) \circ (g_3,1)= (g_3 {g_2}^{-1} {g_1},1 )$$ and 
 $$(g_1, 1)\circ ((g_2, 1) \circ (g_3,1))= ({g_1}^{-1} {g_2}^{-1} g
_3 {g_1}^{2}, 1).$$ Therefore, $(g_1,1),(g_2,1)$ and $(g_3,1)$ associate if and only if $$g_3 {g_2}^{-1} {g^1}={g_1}^{-1} {g_2}^{-1} g
_3 {g_1}^{2}$$ 
\begin{equation}\label{equation}
    \Leftrightarrow g_2(g_1 g_3)= (g_3 g_1)g_2.
\end{equation}

\noindent First, let $g_2 \in Z(D_n)$. Then by equation (\ref{equation}), $g_1g_3=g_3g_1.$ This condition satisfied if and only if one of the following holds.
\begin{itemize}
    \item Both $g_1$ and $g_3$ is from set $R.$
    \item Both $g_1$ and $g_3$ is from set $S$ such that $g_1g_3=g_3g_1.$ Which is possible only when $n$ is even.
    \item Either $g_1 \in Z(D_n)$ or $g_3 \in Z(D_n).$ Since $g_1 \neq g_2$ and $g_2 \neq g_3,$ this is only possible when $n$ is even.
\end{itemize}

\noindent Now, let $g_2 \in R$. Then, we have four cases for the value of $g_1$ and $g_3$ for which it forms hyperedge. These cases are mentioned below:

\begin{itemize}
    \item Both $g_1$ and $g_3$ is from $R$ or $Z(D_n)$. Then  all three elements always satisfy equation (\ref{equation}).
    \item Both $g_1$ and $g_3$ is from $S$. Then by equation (\ref{equation}), the condition for forming hyperedges is $g_1g_3=g_3g_1.$ This condition holds only when $n$ is even.
    \item  $g_1$ is from set $R$, and $g_3$ is from set $S$. Then by equation  (\ref{equation}), the condition for forming hyperedges is $g_2g_1 \in Z(D_n).$
    \item  $g_1 \in S$ and $g_3 \in R.$ Suppose $g_1=x^jy, g_3=x^k$ and $g_2=x^i.$ Then by equation (\ref{equation}), the condition for forming hyperedges is $k={\frac{n}{2}+i}.$ This condition holds only when $n$ is even.
\end{itemize}

\noindent Now, let $g_2 \in S$. Then, we have five cases for the value of $g_1$ and $g_3$ for which it forms hyperedge. These cases are mentioned below:

\begin{itemize}
    \item Both $g_1$ and $g_3$ is from $R$, then they form hyperedges if and only if $g_1g_3 \in Z(D_n).$
    \item Both $g_1$ and $g_3$ is from $Z(D_n)$, then they will form hyperedges for any value of $g_2.$ This condition only holds when $n$ is even.
    \item Both $g_1$ and $g_3$ is from $S$, then they form hyperedges if and only if $g_1g_3 = x^{\frac{n}{2}}.$ This condition only holds when $n$ is even.
    \item  $g_1$ is from set $S$ and $g_3$ is from set $R$ or $Z(D_n)$. Then by  equation (\ref{equation}), the condition for forming hyperedges is $g_2g_1=g_1g_2.$ This condition only holds when $n$ is even.
    \item  $g_1 \in R$ or $R \in Z(D_n)$ and $g_3 \in S.$ Suppose $g_1=x^j, g_3= x^ky$ and $g_2= x^iy.$ Then by equation (\ref{equation}), the condition for forming hyperedges is $i-k=k-i.$ This condition only holds when $n$ is even.
\end{itemize}

\noindent Consider all the three cases ($g_2 \in Z(D_n), g_2 \in R, g_2 \in S)$ mentioned above, total number of hyperedges from this case is:
\begin{align*}
 1 \times [(n-1)(n-2)] + (n-1) \times [(n-1) (n-2) + 1(n)] + n \times [1(n-2)],
\end{align*} when $n$ is odd.
\begin{align*}
    & 2 \times [ (n-2) (n-3)+ 1(n) + 2(2n-2)] \\&+ (n-2) \times [ (n-1) (n-2)+ 1(n) + 2(n) + 1(n)]  \\&+ n \times [ 2 (n-2) + 2 + 1(n) + 1(n) + 1(n)],
\end{align*} when $n$ is even.\\
 

\noindent The order $\mid \mathcal{V} \mid$ of the  associating hypergraph $\mathcal{H(V,E)}$ is $4n$, and the size $\mid \mathcal{E} \mid$ of the associating hypergraph $\mathcal{H(V,E)}$ is the total number of hyperedges from Case $1$ to Case $8$.
 

\section{Some properties of associating hypergraph $\mathcal{H(V,E)}$ on Moufang loop $M(D_n,2)$}

In this section, we will study some properties related to associating hypergraphs $\mathcal{H(V,E)}$ on  Moufang loop $M(D_n,2)$, where $n \geq 3 $.
 
 \begin{theorem}
     The distance $d(u,v)$ between any two vertices $u$ and $v$ of the associating hypergraph $\mathcal{H(V,E)}$ is $1$.
 \end{theorem}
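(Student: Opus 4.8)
The plan is to reduce the statement to a purely combinatorial claim. Since $d(u,v)$ is the length of a shortest path, we have $d(u,v)=1$ exactly when $u$ and $v$ both belong to some common hyperedge. Thus it suffices to exhibit, for every pair of distinct vertices $u,v \in \mathcal{V}$, a hyperedge containing both. The whole proof reduces to producing such a hyperedge.

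The key device is the loop identity $e=(1,0)$. In any loop $e$ associates with every pair of elements: for all $a,b \in M(D_n,2)$ one has $(e \circ a)\circ b = a \circ b = e \circ (a \circ b)$, and the same holds when $e$ sits in the middle or last position, using $e \circ a = a \circ e = a$. Hence any set of three distinct elements, one of which is $e$, is automatically a hyperedge of $\mathcal{H(V,E)}$. (Equivalently, $\{(g,0) : g \in D_n\}$ is a subgroup isomorphic to $D_n$, so it is associative and contains $e$; this is exactly what underlies Case $1$ of Section $3$.)

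With this observation I would split into two cases. If neither $u$ nor $v$ equals $e$, then $\{u,v,e\}$ consists of three distinct elements and is a hyperedge by the remark above, so $d(u,v)=1$. If one of them, say $u$, equals $e$, then since $|\mathcal{V}| = 4n \geq 12$ there is a third vertex $w \notin \{u,v\}$, and the triple $\{e,v,w\}$ is a hyperedge, again giving $d(u,v)=1$. As $u,v$ were an arbitrary pair of distinct vertices, every pairwise distance equals $1$.

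I expect the only point that genuinely needs verification to be the associativity claim for triples through the identity, which is immediate from $e \circ a = a \circ e = a$, together with the bookkeeping that the three chosen elements are distinct; the hypothesis $n \geq 3$ (so $|\mathcal{V}| \geq 12$) supplies the spare vertex needed in the second case. What makes the argument short is that it avoids entirely the parity analysis and the eight sign patterns of Section $3$: a single central/identity vertex already connects everything in one step.
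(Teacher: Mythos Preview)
Your proof is correct and follows the same line as the paper's: both argue that any two vertices lie in a common hyperedge, hence are at distance $1$. The paper, however, simply asserts this fact without justification, whereas you supply the explicit construction via the identity $e=(1,0)$ and handle the distinctness bookkeeping; in that sense your argument is strictly more complete than the one in the paper.
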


 \begin{proof}
      In $\mathcal{H(V,E)}$, any two vertices is a part of some hyperedges so the shortest distance between then is always $1$. Therefore, the distance $d(u,v)$ between any two vertices $u$ and $v$ of the associating hypergraph $\mathcal{H(V,E)}$ is $1$.
 \end{proof}

 \begin{theorem} \label{degree theorem}
      The degree of the elements of the associating hypergraph $\mathcal{H(V,E)}$ when $n$ is even is
      \[deg(v)= \begin{cases}
          A,  & v = (g,0), g \in Z(D_n)\\
          B,  & v = (g,0), g \in R\\
          C,  & v = (g,0), g \in S\\
          D,  & v = (g,1), g \in Z(D_n)\\
          E,  & v = (g,1), g \in R\\
          F,  & v = (g,1), g \in S\\
      \end{cases},\]
      \noindent where,
      
      \begin{align*}
          A=& [3 \times ^{2n-1} C_2] + 3\times[2(^{2n-1} C_1`\times ^{2n}C_1)+ 2 \times ^{2n}C_1]+ 3\times[{2n} \times (2n-1)],\\ \\
          B=&[3 \times ^{2n-1} C_2] + 3\times[2 (^{n-3} C_1) + 4 \times (^{2n}C_1)]+ 3\times[(n-1) \times 2n],\\ \\
          C=&[3 \times ^{2n-1} C_2] + 3\times[4 \times (^{2n}C_1) + ^{2n}C_1]+ 3\times[3 \times 2n],
      \end{align*}
      \begin{align*}
       D=&3\times[^{n-2} C_1\times ^{n-3}C_1 + 4(^{2n-1}C_1) + 2 + n] \\+& 3\times[2 \times (2(2n-1)) + (n-2) \times(2 \times (n-1)) + n \times (3 \times 2)]\\+& (n-2) (n-3)+ 1(n) + 1(2n-2) + (n-2) \times 2(n-2) + n \times (2+ 1 + 1),
   \end{align*}
    \begin{align*}
       E=&3\times[^{n-2} C_1\times ^{n-3}C_1 + 4(^{2n-1}C_1) + 2 + n] \\+& 3\times[2 \times (2(2n-1)) + (n-2) \times(2 \times (n-1)) + n \times (3 \times 2)]\\+& 2 \times (2(n-3)) + 2 \times (2(2n-2) + (n-1)(n-2) + 1(n) + 2(n) + 1(n) \\+& n \times (2(2) + 1 + 1),
   \end{align*}
    \begin{align*}
       F=&3\times[^{n-2} C_1 \times ^{n-3}C_1 + 4(^{2n-1}C_1) + 2 + n] \\+& 3\times[2 \times (2(2n-1)) + (n-2) \times(2 \times (n-1)) + n \times (3 \times 2)]\\+& 2(2+2) +(n-2)\times (1+ 2+ 1)+ 1\times(2(n-2)+2+1(n)+1(n)+ 1(n)).
   \end{align*}
   \end{theorem}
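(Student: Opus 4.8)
The plan is to compute, for each of the six vertex types, the number of hyperedges incident to a representative vertex $v=(g,\alpha)$ by summing over the coordinate position that $v$ occupies inside a (directed) hyperedge and over the eight parity patterns $(\alpha_1,\alpha_2,\alpha_3)$ analysed in Section 3. The starting observation is that $v=(g,\alpha)$ can sit in the $i$-th coordinate of a hyperedge only when the pattern of that case has $\alpha_i=\alpha$. Hence a vertex with $\alpha=0$ receives contributions exactly from Cases 1--7 (every pattern containing a $0$) and nothing from Case 8, while a vertex with $\alpha=1$ receives contributions from Cases 2--8 and nothing from Case 1. This already explains the gross shape of the formulas: the $\alpha=0$ types $A,B,C$ are sums of three blocks, whereas the $\alpha=1$ types $D,E,F$ carry an additional non-symmetric trailing block coming from Case 8.

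Next I would exploit the threefold symmetry recorded in Section 3. Cases 2, 3, 4 (exactly one coordinate equal to $1$) all reduce associativity to the single condition that two designated group coordinates commute, and Cases 5, 6, 7 (exactly one coordinate equal to $0$) all reduce to the condition that a fixed product commutes with the remaining coordinate; within each family the three cases are interchanged by relabelling positions and therefore contribute equally, which is the source of the factors $3\times[\cdots]$. For $v=(g,0)$ the three blocks are then: the Case 1 term, which is independent of whether $g\in Z(D_n)$, $R$ or $S$ because any three group elements associate, giving the shared summand $3\times {}^{2n-1}C_2$; the block from a representative of Cases 2--4, obtained by placing $v$ in each of its two admissible $0$-coordinates; and the block from a representative of Cases 5--7, obtained by placing $v$ in its unique admissible $0$-coordinate. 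For $v=(g,1)$ the roles are dual, with Case 8 replacing Case 1 as the single non-symmetric block.

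To evaluate each block I would translate the associativity condition of the relevant case into a constraint on the two remaining group coordinates and then enumerate completions by splitting those coordinates across $Z(D_n)$, $R$ and $S$, following the bullet analyses of Section 3 verbatim. For $n$ even the facts needed are that $Z(D_n)=\{1,x^{n/2}\}$ has order $2$, so $|R|=n-2$ and $|S|=n$; that a rotation $x^k$ commutes with a reflection $x^jy$ iff $2k\equiv 0 \pmod n$; and that two distinct reflections $x^iy,\,x^jy$ commute iff $i-j\equiv n/2 \pmod n$. Substituting $v=(g,\alpha)$ of each fixed type into these counts and summing the per-position, per-case numbers produces precisely $A,\dots,F$; in particular the central type $A$ is largest in the $\alpha=0$ family because a central $g$ automatically satisfies every commuting constraint.

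The main obstacle is the Case 8 block (and, to a lesser extent, the single-$0$ Cases 5--7), where associativity is the genuinely non-commutative identity $g_2(g_1g_3)=(g_3g_1)g_2$ of (\ref{equation}) rather than plain commutativity. Because this identity couples all three group coordinates, fixing $v$ in the first, second or third slot yields three \emph{different} constraints (a condition on $g_1,g_3$ when $v=g_2$, versus a condition tying $g_2$ to the product $g_1g_3$ otherwise), so the clean threefold symmetry of the earlier families is lost and the full multiway split---$g_2\in Z(D_n)/R/S$ and then $g_1,g_3\in Z(D_n)/R/S$---must be redone separately for each position of $v$. Keeping this enumeration consistent, and correctly discarding the degenerate coincidences among $g_1,g_2,g_3$ forced by the distinctness of the three vertices, is the delicate part; once it is organised case by case, every remaining count collapses to the elementary commuting tallies already computed in Section 3, and assembling the blocks yields the stated values of $A$ through $F$.
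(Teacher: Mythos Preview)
Your plan is correct and matches the paper's own proof essentially step for step: the paper also fixes a representative vertex of each of the six types, observes that a $(g,0)$ vertex receives contributions only from Cases 1--7 while a $(g,1)$ vertex receives contributions only from Cases 2--8, groups Cases 2--4 and Cases 5--7 by the same threefold symmetry you identify, and handles Case 8 by the position-by-position split you describe. The only difference is presentational---the paper simply lists the resulting counts per case without articulating the symmetry argument or the $D_n$ facts as explicitly as you do.
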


  \begin{proof}
       We will calculate the degree of all different types of elements separately.\\

   \noindent  Consider the elements of type $(g,0)$ such that $g \in Z(D_n).$ Then from Case $1$, the number of hyperedges incident to $(g,0),$ where $ g \in Z(D_n)$ is $3 \times ^{2n-1} C_2.$

   \noindent  From each Case $2,3$ and $4$, the number of hyperedges incident to $(g,0),$ where $g \in Z(D_n)$ is $2(^{2n-1} C_1`\times ^{2n}C_1)+ 2 \times ^{2n}C_1.$

   \noindent From each Case $5,6$ and $7$, the number of hyperedges incident to $(g,0),$ where $g \in Z(D_n)$ is $2n \times (2n-1).$

   \noindent Therefore, the degree of the elements of type $(g,0), g \in Z(D_n)$ is \[[3 \times ^{2n-1} C_2] + 3\times[2(^{2n-1} C_1`\times ^{2n}C_1)+ 2 \times ^{2n}C_1]+ 3\times[{2n} \times (2n-1)]= A.\]

   \noindent  Consider the elements of type $(g,0)$ such that $g \in R.$ Then from Case $1$, the number of hyperedges incident to $(g,0),$ where $g \in R $ is $3 \times ^{2n-1} C_2.$

   \noindent From each Case $2,3$ and $4$, the number of hyperedges incident to $(g,0),$ where $g \in  R$ is $2 (^{n-3} C_1) + 4 \times (^{2n}C_1).$

   \noindent From each Case $5,6$ and $7$, the number of hyperedges incident to $(g,0),$ where $g \in R$ is $(n-1) \times 2n.$

   \noindent Therefore, the degree of the elements of type $(g,0), g \in R$ is \[[3 \times ^{2n-1} C_2] + 3\times[2 (^{n-3} C_1) + 4 \times (^{2n}C_1)]+ 3\times[(n-1) \times 2n]=B.\]

   \noindent  Consider the elements of type $(g,0)$ such that $g \in S.$ Then from Case $1$, the number of hyperedges incident to $(g,0),$ where  $g \in S$ is $3 \times ^{2n-1} C_2.$

   \noindent From each Case $2,3$ and $4$, the number of hyperedges incident to $(g,0),$ where $g \in S$ is $ 4 \times (^{2n}C_1) + ^{2n}C_1.$

   \noindent From each Case $5,6$ and $7$, the number of hyperedges incident to $(g,0),$ where $g \in S$ is $ 3 \times 2n.$

   \noindent Therefore, the degree of the elements of type $(g,0), g \in S$ is \[[3 \times ^{2n-1} C_2] + 3\times[4 \times (^{2n}C_1) + ^{2n}C_1]+ 3\times[3 \times 2n]=C.\]

   \noindent  Consider the elements of type $(g,1)$ such that $g \in Z(D_n).$ Then from each Case $2,3$ and $4$, the number of hyperedges incident to $(g,1),$ where $ g \in Z(D_n)$ is $^{n-2} C_1`\times ^{n-3}C_1 + 4(^{2n-1}C_1) + 2 + n.$

   \noindent From each Case $5,6$ and $7$, the number of hyperedges incident to $(g,1),$ where $g \in Z(D_n)$ is $ 2 \times (2(2n-1)) + (n-2) \times(2 \times (n-1)) + n \times (3 \times 2).$

   \noindent From Case $8$, the number of hyperedges incident to $(g,1),$  where $g \in Z(D_n)$ is $(n-2) (n-3)+ 2(n) + 1(2n-2) + (n-2) \times (2(n-2)) + n \times (2+ 1 + 1).$

   \noindent Therefore, the degree of the elements of type $(g,1), g \in Z(D_n)$ is 
   \begin{align*}
       &3\times[^{n-2} C_1`\times ^{n-3}C_1 + 4(^{2n-1}C_1) + 2 + n] \\+& 3\times[2 \times (2(2n-1)) + (n-2) \times(2 \times (n-1)) + n \times (3 \times 2)]\\+& (n-2) (n-3)+ 2(n) + 1(2n-2) + (n-2) \times 2(n-2) + n \times (2+ 1 + 1) \\=& D.
   \end{align*}

    \noindent  Consider the elements of type $(g,1)$ such that $g \in R.$  Then from  each Case $2,3$ and $4$, the number of hyperedges incident to $(g,1),$ where $g \in R$ is $^{n-2} C_1`\times ^{n-3}C_1 + 4(^{2n-1}C_1) + 2 + n.$

   \noindent From each Case $5,6$ and $7$, the number of hyperedges incident to $(g,1),$ where $ g \in R$ is $ 2 \times (2(2n-1)) + (n-2) \times(2 \times (n-1)) + n \times (3 \times 2).$

   \noindent From Case $8$, the number of hyperedges incident to $(g,1),$ where $ g \in R$ is $ 2 \times (2(n-3)) + 2 \times (2(2n-2) + (n-1)(n-2)+(n-3)(2(n-2)) + 1(n) + 2(n) + 2(n) + n \times (2(2) + 1 + 1).$

   \noindent Therefore, the degree of the elements of type $(g,1), g \in R$ is 
   \begin{align*}
       &3\times[^{n-2} C_1`\times ^{n-3}C_1 + 4(^{2n-1}C_1) + 2 + n] \\+& 3\times[2 \times (2(2n-1)) + (n-2) \times(2 \times (n-1)) + n \times (3 \times 2)]\\+& 2 \times (2(n-3)) + 2 \times (2(2n-2) + (n-1)(n-2)+(n-3)(2(n-2))\\+& 1(n) + 2(n) + 2(n) + n \times (2(2) + 1 + 1) \\=& E.
   \end{align*}

    \noindent  Consider the elements of type $(g,1)$ such that $g \in S,$ Then from each Case $2,3$ and $4$, number of hyperedges incident to $(g,1),$ where $ g \in S$ is $^{n-2} C_1`\times ^{n-3}C_1 + 4(^{2n-1}C_1) + 2 + n.$

   \noindent From each Case $5,6$ and $7$, number of hyperedges incident to $(g,1),$ where $ g \in S$ is $ 2 \times (2(2n-1)) + (n-2) \times(2 \times (n-1)) + n \times (3 \times 2).$

   \noindent From Case $8$, number of hyperedges incident to $(g,1),$ where $ g \in S$ is $ 2(2+2) +(n-2)\times (1+ 2+ 1)+ 1 \times ((n-2)(n-3))+ 1 \times 2 + 1 \times n + 2 (n-1) + 2(n) + 2(n).$

   \noindent Therefore, the degree of the elements of type $(g,1), g \in S$ is 
   \begin{align*}
       &3\times[^{n-2} C_1`\times ^{n-3}C_1 + 4(^{2n-1}C_1) + 2 + n] \\+& 3\times[2 \times (2(2n-1)) + (n-2) \times(2 \times (n-1)) + n \times (3 \times 2)]\\+& 2(2+2) +(n-2)\times (1+ 2+ 1)+ 1 \times ((n-2)(n-3))+ 1 \times 2 \\+& 1 \times n + 2 (n-1) + 2(n) + 2(n) \\=& F.
   \end{align*}
  \end{proof}

  \begin{theorem} \label{degree theorem odd}
      The degree of the elements of the associating hypergraph $\mathcal{H(V,E)}$ when $n$ is odd is
      \[deg(v)= \begin{cases}
          A,  & v = (g,0), g \in Z(D_n)\\
          B,  & v = (g,0), g \in R\\
          C,  & v = (g,0), g \in S\\
          D,  & v = (g,1), g \in Z(D_n)\\
          E,  & v = (g,1), g \in R\\
          F,  & v = (g,1), g \in S\\
      \end{cases},\]
      \noindent where,
      
      \begin{align*}
          A=& 3(2n-1) + 3[ 2(2n-1)(2n)]+3[2n(2n-1)],\\ 
          B=&  3(2n-1) + 3[2(n-2)+2(2n)]+3[(n-1)(2n)],\\ 
          C=&  3(2n-1) + 3[2(2n)] + 3[2n],
      \end{align*}
      \begin{align*}
       D=& 3[(n-1)(n-2)+2(2n-1)]+ 3[2(2n-1)+2(n-1)(n-1)+ 2(n)] \\+& (n-1)(n-2)+2(n-1)(n-2), 
   \end{align*}
    \begin{align*}
       E=& 3[(n-1)(n-2)+2(2n-1)]+3[2(2n-1)+ 2(n-1)(n-1)+2(n)]\\+& 2(n-2)+ (n-2)((n-1)+2) +n,
   \end{align*}
    \begin{align*}
       F=& 3[(n-1)(n-2)+2(2n-1)]+3[2(2n-1)+2(n-1)(n-1)+2(n)]\\+& 1(n-1)+1(n-1).
   \end{align*}
   \end{theorem}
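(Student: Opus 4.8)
The argument runs exactly parallel to the proof of Theorem \ref{degree theorem}, the only change being that the case analysis of Section 3 is now specialised to odd $n$. For odd $n$ the centre is trivial, $Z(D_n)=\{1\}$, so that $|R|=n-1$ and $|S|=n$; in particular every subcase in Cases 2--8 that was flagged as occurring ``only when $n$ is even'' disappears, and the commuting structure of $D_n$ collapses to the single rule that two distinct elements commute if and only if both lie in the cyclic subgroup $\langle x\rangle=\{1\}\cup R$. I would fix a vertex $v$ of each of the six types in turn and add up, case by case, the number of associating (ordered) hyperedges incident to $v$, reading the associativity condition of each case off the displayed equivalences in Section 3.

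The bookkeeping is organised by the $\alpha$-pattern of the eight cases. A vertex $(g,0)$ can occupy only coordinates with second entry $0$, so it collects contributions from Case 1 (all three slots), from each of Cases 2, 3, 4 (the two $\alpha=0$ slots) and from each of Cases 5, 6, 7 (the single $\alpha=0$ slot), and nothing from Case 8; dually, a vertex $(g,1)$ collects contributions from Cases 2, 3, 4, from Cases 5, 6, 7 and from Case 8, but nothing from Case 1. Since Cases 2, 3, 4 impose the same condition (commutativity of the two group elements sitting in the $\alpha=0$ slots), and Cases 5, 6, 7 likewise impose a common condition (the $\alpha=0$ element commuting with the relevant product of the two $\alpha=1$ elements), these triples contribute equally, which is the origin of the factor $3$ in front of the corresponding brackets of $A$--$F$. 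For each such case I would place $v$ in each admissible slot, count the partners forced to commute with the appropriate word in $g$ by the odd-$n$ commuting rule, and multiply by the free choices for the remaining coordinate.

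Concretely, for the $(g,0)$ types the Case 1 term is immediate, as $D_n$ is associative and hence every triple of distinct group elements is a hyperedge. The Cases 2--4 terms reduce to counting the elements commuting with $g$, of which there are $2n-1$, $n-1$, or $1$ according as $g\in Z(D_n)$, $g\in R$, or $g\in S$ (for odd $n$ a reflection commutes only with the identity and with itself), times the free $\alpha=1$ coordinate; the Cases 5--7 terms apply the same commuting rule to the product word $g_3^{-1}g_2$ and its analogues, counting which product values commute with $g$ and how many ordered pairs realise each. For the $(g,1)$ types the Cases 2--4 and 5--7 terms are found identically, with $g$ now in the $\alpha=1$ slot. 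The genuinely laborious part, and the step I expect to be the main obstacle, is the Case 8 contribution to $D$, $E$, $F$: here the condition is the asymmetric identity $g_2(g_1g_3)=(g_3g_1)g_2$ of equation (\ref{equation}), so $v$ must be placed separately in each of the three positions $g_1,g_2,g_3$ and, for every placement, the subcase analysis of Section 3 (according to whether the other two elements lie in $Z(D_n)$, $R$ or $S$) re-run and the admissible completions counted. For odd $n$ the central subcases collapse, but one must still track the distinctness of $g_1,g_2,g_3$ carefully to avoid over- or under-counting.

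Finally I would verify the result by the handshake identity for the directed $3$-uniform hypergraph $\mathcal{H(V,E)}$: summing $\deg(v)$ over all $4n$ vertices, with multiplicities $1,\,n-1,\,n,\,1,\,n-1,\,n$ for the six types, must return three times the total number of hyperedges computed in Section 3 (specialised to odd $n$). This check simultaneously pins down the constants $A$--$F$ and guards against the bookkeeping slips that Case 8 invites.
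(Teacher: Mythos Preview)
Your proposal is correct and follows exactly the approach the paper takes: the paper's own proof is the single sentence ``The proof is similar to the proof of Theorem \ref{degree theorem}'', and you have simply unpacked what that similarity entails for odd $n$ (trivial centre, the consequent simplification of the commuting rule, and the same case-by-case bookkeeping over the eight $\alpha$-patterns). Your added handshake-identity check is a sensible safeguard not present in the paper, but the underlying method is the same.
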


   \begin{proof}
       The proof is similar to the proof of Theorem \ref{degree theorem}.
   \end{proof}

  \noindent  From the Definition \ref{Dv matrix}, the degree-vertex matrix $D_v$ of the associating hypergraph  $\mathcal{H(V,E)}$
    \begin{equation*}
        D_v=\mathrm{diag}( A, \underbrace{B}_{(n-2)- times}, \underbrace{C}_{n-times}, D, \underbrace{E}_{(n-2)-times}, \underbrace{F}_{n-times} ),
    \end{equation*} when $n$ is even and $A, B, C, D, E$ and $F$ are given in Theorem \ref{degree theorem}.\\

   \noindent Similarly, the degree-vertex matrix $D_v$ of the associating hypergraph  $\mathcal{H(V,E)}$
    \begin{equation*}
        D_v=\mathrm{diag}( A, \underbrace{B}_{(n-1)- times}, \underbrace{C}_{n-times}, D, \underbrace{E}_{(n-1)-times}, \underbrace{F}_{n-times} ),
    \end{equation*} when $n$ is odd and $A, B, C, D, E$ and $F$ are given in Theorem \ref{degree theorem odd}.\\

  \noindent From the Definition \ref{De matrix}, the degree-edge matrix of the associating hypergraph  $\mathcal{H(V,E)}$ is $D_e=\mathrm{diag}( \underbrace{3, 3, 3, \ldots, 3}_{\mid \mathcal{E}\mid})$, where $\mathcal{E}$ is the size of the hypergraph $\mathcal{H(V,E)}$.

\begin{theorem}
    The independence number $\alpha(\mathcal{H})$ of the associating hypergraph $\mathcal{H(V,E)}$ is $ \frac{n}{2}+4,$ when $n$ is even and $n+2,$ when $n$ is odd.
\end{theorem}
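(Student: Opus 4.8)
The plan is to determine $\alpha(\mathcal H)$ by trapping it between a structural upper bound and an explicit independent set, organising everything along the three vertex types $(g,0)$, $(x^i,1)$ and $(x^iy,1)$ that already govern the case analysis of Section~3. Two bounds are essentially free. By Case~1, any three vertices $(g_1,0),(g_2,0),(g_3,0)$ lie in the associative part of $M(D_n,2)$ and hence associate in every order, so an independent set contains at most two vertices with second coordinate $0$. By the Case~8 criterion~(\ref{equation}), three rotations $(x^i,1),(x^j,1),(x^k,1)$ commute and therefore associate in every order, so an independent set contains at most two rotation-type vertices with second coordinate $1$.

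For the reflection-type vertices I would read off from Case~8 exactly when $(x^iy,1),(x^jy,1),(x^ky,1)$ associate, namely when the outer pair multiplies to $x^{n/2}$. When $n$ is odd this relation is impossible, so all $n$ reflection-type vertices may be used at once; when $n$ is even the reflections split into the $n/2$ antipodal pairs $\{x^iy,\,x^{i+n/2}y\}$ whose product is $x^{n/2}$, and an independent set may keep at most one vertex from each pair, giving at most $n/2$ reflection-type vertices. For the lower bound I would then exhibit matching extremal sets: for $n$ odd the set $\{(x^iy,1):0\le i\le n-1\}\cup\{(1,1),(x,1)\}$ of size $n+2$, where the two rotations are chosen so that $1\cdot x\neq 1$ and hence, by Cases~5--8, no reflection completes a hyperedge with them; for $n$ even, one reflection from each antipodal pair together with two rotation-type and two suitably chosen $\alpha=0$ vertices, of size $\tfrac n2+4$.

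The main obstacle is the matching upper bound, since the three type-counts interact through Cases~2--7. The naive sum of the three bounds is $2+2+n=n+4$ for $n$ odd and $2+2+\tfrac n2=\tfrac n2+4$ for $n$ even; the even total is already the claimed value, but in the odd case one must show that the four ``extra'' vertices (two rotation-type with $\alpha=1$ and two with $\alpha=0$) cannot all be present once all $n$ reflections are taken. I expect to argue this directly from the commutation conditions of Cases~5--7: an $\alpha=0$ rotation commutes with the (rotational) product of any two reflections and hence always completes a hyperedge, which forces the $\alpha=0$ vertices to be reflections; and once two rotation-type $\alpha=1$ vertices are present, the reflections $x^{k-p}y$ arising from a rotation--reflection pair range over all of $S$, so any $\alpha=0$ reflection again completes a hyperedge. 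Hence at least two of the four extra vertices must be dropped, yielding $\alpha(\mathcal H)\le n+2$ and closing the gap with the construction. Verifying that the chosen extremal sets contain no hyperedge is then a routine check against Cases~2--8.
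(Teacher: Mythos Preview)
Your strategy—partitioning the vertex set into $\alpha=0$ vertices, rotation-type $(x^i,1)$, and reflection-type $(x^iy,1)$, bounding each type separately, and then exhibiting an extremal set—is exactly the paper's approach. The paper, however, gives essentially only the construction and declares it maximal, whereas you attempt a genuine upper-bound argument; in that sense your proposal is more complete than the paper's own proof.

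Two points deserve comment. First, your extremal set for odd $n$ is not the paper's: you adjoin the two rotations $(1,1),(x,1)$ to the $n$ reflections $(x^iy,1)$, while the paper adjoins two vertices of type $(g,0)$ with $g\in S$. Your justification (``$1\cdot x\neq 1$ and hence, by Cases~5--8, no reflection completes a hyperedge'') is not an argument; the relevant Case~8 condition $g_2(g_1g_3)=(g_3g_1)g_2$ has to be checked for all six orderings of $\{1,x,x^iy\}$ and of $\{1,x^iy,x^jy\}$ and $\{x,x^iy,x^jy\}$, not deduced from the single product $1\cdot x$.

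Second, your upper-bound argument for odd $n$ is incomplete. You show that \emph{if all $n$ reflection-type vertices are present} then no $\alpha=0$ vertex can be added (because, via Case~5, an $\alpha=0$ rotation always completes a hyperedge with two reflections, and an $\alpha=0$ reflection $(x^my,0)$ is blocked by the pair $(x^p,1),(x^{m+p}y,1)$). But an independent set containing only $n-1$ reflection-type vertices could in principle reach $n+3$ by your type bounds, and your Case~5 argument breaks there: the reflection $(x^{m+p}y,1)$ needed to block a given $(x^my,0)$ may be exactly the one omitted. You need either to treat the case of $n-1$ reflections separately or to give a uniform argument covering all independent sets, not just those maximising the reflection count. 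The paper does not address this gap at all, so identifying it is already progress; closing it cleanly is the remaining work.
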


\begin{proof}
   Suppose $n$ is even. Since $\frac{n}{2}$ vertices of type $(g,1),$ where $ g\in S$ does not form hyperedges with itself. Therefore, we have $\frac{n}{2}$ vertices that are independent from each other. We can not choose more than two vertices from the type $(g,1),$ where $ g\in R$ otherwise they form hyperedges. We have to choose these two vertices such that they do not form hyperedges with $\frac{n}{2}$ elements of type $(g,1), g\in S$. Also, we can not choose more than two vertices from $(g,0),$ where $g\in S$, otherwise they also form hyperedges. Choose these two vertices of type $(g,0),$ where $g\in S$ such that they do not form hyperedges with set of $\frac{n}{2}+2$ vertices we have selected above. Hence the maximum independent set consists of $\frac{n}{2}+2+2$ elements.
    Therefore, the independence number $\alpha(\mathcal{H})= \frac{n}{4}+4.$\\

    \noindent Now, let $n$ be odd. The proof is similar as above. One can check that  $n$ vertices of type $(g,1), g \in S$ and two vertices of type $(g,0), g\in S$ make the maximum independent set. Hence, the independence number $\alpha(\mathcal{H})= n+2.$
\end{proof}

\begin{theorem}
    The transversal number $\tau(\mathcal{H})$ of the associating hypergraph $\mathcal{H(V,E)}$  is $\frac{7n}{2}-4,$ when $n$ is even and $3n-2,$ when $n$ is odd.
\end{theorem}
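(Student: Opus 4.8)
The plan is to obtain $\tau(\mathcal{H})$ as an immediate consequence of the complementary relation between the independence number and the transversal number, rather than computing a minimal transversal set directly. The key tool is Lemma \ref{trans property}, which asserts that $\alpha(H) + \tau(H) = |V|$ for any hypergraph $H(V,E)$ without empty edges. Since the associating hypergraph on $M(D_n,2)$ is $3$-uniform (every hyperedge has exactly three vertices, so no hyperedge is empty), this lemma applies directly to $\mathcal{H(V,E)}$.

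First I would record the two inputs needed. From the end of Section $3$ we have $|\mathcal{V}| = 4n$. From the preceding theorem we have the independence number
\[
\alpha(\mathcal{H}) = \begin{cases} \dfrac{n}{2} + 4, & n \text{ even},\\[4pt] n + 2, & n \text{ odd}. \end{cases}
\]
With these in hand, the entire argument reduces to rearranging Lemma \ref{trans property} into $\tau(\mathcal{H}) = |\mathcal{V}| - \alpha(\mathcal{H})$.

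Next I would carry out the two substitutions. For $n$ even,
\[
\tau(\mathcal{H}) = 4n - \left(\frac{n}{2} + 4\right) = \frac{7n}{2} - 4,
\]
and for $n$ odd,
\[
\tau(\mathcal{H}) = 4n - (n + 2) = 3n - 2,
\]
which are exactly the claimed values. This completes the verification.

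There is no serious obstacle in this proof; the work has effectively been done already in establishing $|\mathcal{V}| = 4n$ and in computing $\alpha(\mathcal{H})$. The only point requiring a word of justification is the hypothesis of Lemma \ref{trans property}, namely the absence of empty edges, which is guaranteed by $3$-uniformity and should be stated explicitly before invoking the lemma. If anything could go wrong it would be an arithmetic slip in the even case, so I would double-check that $4n - n/2 = 7n/2$ before concluding.
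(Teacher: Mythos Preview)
Your proposal is correct and follows essentially the same approach as the paper: both invoke Lemma~\ref{trans property} (noting that the hypergraph has no empty edges), substitute $|\mathcal{V}|=4n$ and the previously computed $\alpha(\mathcal{H})$, and solve for $\tau(\mathcal{H})$. Your write-up is in fact slightly more careful in justifying the no-empty-edge hypothesis via $3$-uniformity.
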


\begin{proof}
    We can compute the transversal number by selecting vertices that can cover all the edges, but there is no empty edge is $\mathcal{H(V,E)}$, the property given in Lemma \ref{trans property} hold.\\
    Suppose $n$ is even. Then,
    \begin{align*}
        \alpha(\mathcal{H})+ \tau(\mathcal{H})=& 4n\\
   \frac{n}{2}+4 + \tau(\mathcal{H})=& 4n\\
    \tau(\mathcal{H})=& \frac{7n}{2}-4.
    \end{align*}

   \noindent  Now, let $n$ is odd. The proof is similar as above. Hence, the transversal number $\tau(\mathcal{H})$ of the associating hypergraph $\mathcal{H(V,E)}$ is $3n-2.$
\end{proof}

\begin{theorem}\label{covering number}
    The covering number $\rho(\mathcal{H})$ of the associating hypergraph $\mathcal{H(V,E)}$ is $n+ \lceil \frac{n}{3} \rceil.$
\end{theorem}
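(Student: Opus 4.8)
The plan is to establish the two matching estimates $\rho(\mathcal{H}) \ge n + \lceil n/3 \rceil$ and $\rho(\mathcal{H}) \le n + \lceil n/3 \rceil$. I would first record the arithmetic identity $n + \lceil n/3 \rceil = \lceil 4n/3 \rceil$, which holds because $n$ is an integer, so the integer part $n$ may be pulled outside the ceiling of $n + n/3$. For the lower bound, observe that $\mathcal{H}$ is $3$-uniform with $|\mathcal{V}| = 4n$, and by the distance theorem no vertex is isolated, so an edge cover exists; since each hyperedge meets exactly three vertices, any cover must use at least $\lceil 4n/3 \rceil$ hyperedges to reach all $4n$ vertices. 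This gives $\rho(\mathcal{H}) \ge \lceil 4n/3 \rceil = n + \lceil n/3 \rceil$ at once.

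For the upper bound I would exhibit a cover of exactly this size. The structural input is that $\mathcal{V}$ decomposes into three blocks, each a complete $3$-uniform sub-hypergraph: the $2n$ vertices $A = \{(g,0) : g \in D_n\}$, the $n$ type-$1$ vertices $B_R$ whose group part is a power of $x$, and the $n$ type-$1$ vertices $B_S = \{(g,1) : g \in S\}$. Completeness of $A$ is Case $1$ ($D_n$ is a group, so every triple associates). For $B_R$ and $B_S$ I would invoke the Case $8$ associativity condition $g_2(g_1 g_3) = (g_3 g_1)g_2$: three rotations commute, so it holds trivially on $B_R$; and for three reflections $x^a y,\, x^b y,\, x^c y$ a short computation gives $g_2(g_1 g_3) = x^{\,b-a+c}y = (g_3 g_1)g_2$, so it holds on $B_S$ as well.

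With the blocks available, I would build the cover by partitioning each block greedily into disjoint within-block hyperedges and then absorbing the few leftover vertices with at most two cross-block edges. If $3 \mid n$, then $|A| = 2n$ and $|B_R| = |B_S| = n$ are all divisible by $3$, the blocks partition exactly into $\tfrac{2n}{3} + \tfrac{n}{3} + \tfrac{n}{3} = \tfrac{4n}{3}$ hyperedges, and nothing further is needed. If $n \equiv 1 \pmod 3$, the leftovers after maximal within-block partitioning can be taken to be $(1,0)$ and $(x,0)$ from $A$, one vertex $s \in B_S$, and one vertex $t \in B_R$; I would cover them by the two hyperedges $\{(1,0),(x,0),s\}$ and $\{(1,0),t,s\}$, the first valid by Case $2$ (its group entries $1$ and $x$ commute) and the second by Case $5$ (its condition reduces to $1$ commuting with $t^{-1}s$, automatic since $1$ is central). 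If $n \equiv 2 \pmod 3$, the leftovers can be taken to be $(1,0)$ together with $s_1, s_2 \in B_S$ and $t_1, t_2 \in B_R$, covered by the two Case $5$ hyperedges $\{(1,0),s_1,t_1\}$ and $\{(1,0),s_2,t_2\}$. A direct count then gives totals $\tfrac{4n}{3}$, $\tfrac{4n+2}{3}$, and $\tfrac{4n+1}{3}$ in the three residue classes, i.e. exactly $\lceil 4n/3\rceil = n + \lceil n/3\rceil$.

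The main obstacle is the remainder-gluing step of the upper bound. Covering the three blocks independently is not optimal: it costs $\lceil 2n/3\rceil + 2\lceil n/3\rceil$, which exceeds $\lceil 4n/3\rceil$ by exactly one edge whenever $3 \nmid n$, so the two extra edges must each reach across more than one block. This is precisely where the central element $1 \in Z(D_n)$ is essential, since placing it as the first coordinate of a cross-block edge makes the commutativity conditions of Cases $2$ and $5$ hold for free; the remaining care is the bookkeeping that, in each residue class, the leftover vertices can be selected (using the divisibility of $2n-2$, $2n-1$, $n-1$, $n-2$ by $3$) so that two such edges cover them all.
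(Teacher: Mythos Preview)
Your argument is correct and in fact more complete than the paper's. The paper only exhibits a cover of size $n+\lceil n/3\rceil$ and never argues minimality; your counting lower bound $\rho(\mathcal{H})\ge\lceil|\mathcal{V}|/3\rceil=\lceil 4n/3\rceil$ together with the identity $\lceil 4n/3\rceil=n+\lceil n/3\rceil$ supplies the missing half. On the upper bound the two constructions differ: the paper splits the vertex set into one block of size $3n$ and one of size $n$ (with the choice of blocks depending on the parity of $n$) and asserts that the large block admits $n$ pairwise disjoint hyperedges, while you split uniformly into the three blocks $A$, $B_R$, $B_S$ of sizes $2n$, $n$, $n$ and verify that each is a \emph{complete} $3$-uniform sub-hypergraph, so any partition of a block into triples yields hyperedges. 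Your route is cleaner in that the completeness of each block makes the within-block covering automatic, whereas the paper's $3n$-vertex block is not complete (for instance, a reflection $(g_1,0)$, a non-central rotation $(g_2,0)$, and a rotation $(g_3,1)$ need not associate), so the existence of $n$ disjoint hyperedges there requires an argument the paper does not give. The trade-off is that you must then glue the leftovers across blocks with one or two extra edges through the central vertex $(1,0)$, splitting into cases modulo~$3$; the paper's coarser split avoids this bookkeeping but at the cost of the unjustified claim about the $3n$-block.
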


\begin{proof}
   Suppose $n$ is even. Since there are $3n$ elements of type $(g,0)$ and $\{(g,1),$ where $g\in R\},$ they always form $n$ disjoint hyperedges with itself. Therefore, with those $n$ disjoint hyperedges, we can cover $3n$ vertices of $M(D_n,2).$ Now, we have left $n$ elements of form $(g,1),$ where $ g \in S,$ they can form $\lfloor \frac{n}{3} \rfloor$ disjoint hyperedges with itself. We cover $3 \times \lfloor\frac{n}{3} \rfloor$ vertices with these $\lfloor \frac{n}{3} \rfloor$ disjoint hyperedges. If $n$ is not divisible by $3,$ then one or two vertices are left. Then we need one more hyperedge to cover these vertices. So, we can cover all $n$ vertices of type $(g,1), g\in S$ by $\lceil \frac{n}{3}\rceil$ hyperedges. Therefore, the covering number $\rho(\mathcal{H})$ of the associating hypergraph $\mathcal{H(V,E)}$ is $n+ \lceil \frac{n}{3} \rceil.$ \\

   \noindent Now, let $n$ is odd. Here, elements of type $\{(g,0),$ where $g \in S\}$ and $(g,1)$ will form $n$ disjoint hyperedges. Elements of type $(g,0),$ where $ g \in R$ will make $\lfloor \frac{n}{3} \rfloor$ disjoint hyperedges. If $n$ is not divisible by $3$, then remaining one or two vertices of $(g,0),$ where $ g \in R$ can be cover by one more hyperedge. Hence, the covering number $\rho(\mathcal{H})$ of the associating hypergraph $\mathcal{H(V,E)}$ is also $ n+ \lceil \frac{n}{3} \rceil.$ 
\end{proof} 

    \begin{theorem}
        The chromatic number $\psi(\mathcal{H})$ of the associating hypergraph $\mathcal{H(V,E)}$ is $\frac{7n}{4}-1,$ when $n$ is even and ${3n}-{1},$ when $n$ is odd.
    \end{theorem}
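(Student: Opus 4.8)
The plan is to read a (weak) coloring of $\mathcal{H(V,E)}$ as a partition of $\mathcal{V}$ into color classes, each of which must be an independent set: a class that contained an entire hyperedge would be monochromatic, so $\psi(\mathcal{H})$ is the least number of independent sets whose union is $\mathcal{V}$. I would first fix two anchors. Since no hyperedge has fewer than three vertices, any two vertices form an independent set, so partitioning the $4n$ vertices into pairs already gives $\psi(\mathcal{H})\le 2n$. On the other side, Case $1$ shows that every triple of vertices of the form $(g,0)$ associates, so the $2n$ vertices $\{(g,0):g\in D_n\}$ induce a complete $3$-uniform sub-hypergraph; hence every color class contains at most two of them and at least $n$ classes are forced. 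This already pins $n\le\psi(\mathcal{H})\le 2n$.

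For the lower bound I would sharpen the count of forced classes type by type. The $2n$ group vertices demand $n$ classes of size at most two. Among the vertices $(g,1)$ I would invoke the associating conditions of Case $8$: the subconfigurations with $g_1,g_3\in R\cup Z(D_n)$ always associate, so the vertices $(g,1)$ with $g\in Z(D_n)\cup R$ again lie in a near-complete configuration and can be placed at most two to a class, whereas the vertices $(g,1)$ with $g\in S$ contain the large mutually non-associating family of size $\frac{n}{2}$ (for even $n$) or $n$ (for odd $n$) already isolated in the independence-number theorem. Counting the classes forced by each block, and crediting the savings from this one large independent set of size $\alpha(\mathcal{H})$, is how I would try to reach the asserted values.

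For the upper bound I would exhibit a matching partition: group the $2n$ vertices $(g,0)$ into $n$ independent pairs; use the maximal independent set furnished by the independence-number theorem as a single class to absorb the $(g,1),\,g\in S$ vertices together with the few others it contains; and split the remaining $(g,1)$ vertices into further classes of size two, checking each mixed class against the relevant condition of Cases $2$--$8$ to confirm it contains no hyperedge. Counting the classes should then reproduce the lower bound, with the even/odd split entering through $|Z(D_n)|$ and through which reflections in $S$ commute, exactly as in the independence argument.

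The main obstacle is twofold. First, the upper-bound construction has to reuse classes efficiently enough to beat the trivial value $2n$, and every mixed class must be verified independent against Cases $2$--$8$, so the bookkeeping must be carried out separately in the two parities. Second, and more delicate, is reconciling the target with the trivial inequality: because any two vertices form an independent set one always has $\psi(\mathcal{H})\le 2n$, so the even-case value $\frac{7n}{4}-1$ is at least consistent, but the odd-case value $3n-1$ exceeds $2n$. Matching the latter would force interpreting the coloring in a sense stronger than the weak notion of the definition of hypergraph coloring, and making the intended notion of coloring precise is the crux on which the whole argument turns.
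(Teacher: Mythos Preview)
Your upper-bound strategy is essentially what the paper does, though the paper carries it out far more tersely: it takes the maximum independent set of size $\alpha(\mathcal{H})$ as a single color class and then simply pairs off the remaining $4n-\alpha(\mathcal{H})$ vertices, arriving at $1+\tfrac{1}{2}\bigl(\tfrac{7n}{2}-4\bigr)=\tfrac{7n}{4}-1$ in the even case and writing $({3n}-2)+1=3n-1$ in the odd case. The paper gives no lower-bound argument at all; the block-by-block counting you sketch (forcing at least $n$ classes from the $(g,0)$ vertices, etc.) goes well beyond anything the paper attempts.

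The obstacle you flag at the end is real and is not a defect in your reasoning. In a $3$-uniform hypergraph every $2$-element set is independent, so $\psi(\mathcal{H})\le\lceil|\mathcal{V}|/2\rceil=2n$, and the asserted odd value $3n-1$ violates this for every $n\ge 3$. The paper's odd-case computation is also internally inconsistent with its even-case one: in the even case the remaining $\tfrac{7n}{2}-4$ vertices are halved into pairs before adding $1$, whereas in the odd case the $3n-2$ remaining vertices are \emph{not} halved (and indeed cannot be, since $3n-2$ is odd). So the odd formula cannot be the weak chromatic number under Definition~2.2, and the even formula is at best an upper bound with no matching lower bound supplied. Your instinct that a different coloring notion would be needed to salvage $3n-1$ is the right diagnosis; the paper does not make such a notion explicit.
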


\begin{proof}
    Suppose $n$ is even. We have $\frac{n}{2}+4$ maximum independence set of vertices. If we color them by one color, then we have left $\frac{7n}{2}-4$ vertices. These  $\frac{7n}{2}-4$ vertices form hyperedges with each other but if we color every two vertices with one different color, then in associating hypergraph $\mathcal{H(V,E)}$ there is no hyperedge with all three vertices has the same color. Therefore, the chromatic number $\psi(\mathcal{H})= (\frac{7n}{4}-2)+1= \frac{7n}{4}-1.$\\

    \noindent Now, let $n$ is odd. The proof is similar to the above. Hence, the chromatic number $\psi(\mathcal{H})= ({3n}-2)+1= 3n-1.$
\end{proof}

\begin{remark}
    \cite[p.1]{hardness} The associating hypergraph $\mathcal{H(V,E)}$ with $\frac{7n}{4}-1$-colorable and $3$-uniform hypergraph with $c$-colors is $np$-hard where $c > \frac{7n}{4}-1,$ when $n$ is even. Similarly, the associating hypergraph $\mathcal{H(V,E)}$ with $3n-1$-colorable and $3$-uniform hypergraph with $c$-colors is $np$-hard where $c > 3n-1,$ when $n$ is odd.
\end{remark}

 \begin{theorem}
     The strong chromatic number $\overline{\psi}(\mathcal{H})$ of the associating hypergraph $\mathcal{H(V,E)}$ is ${4n}.$
 \end{theorem}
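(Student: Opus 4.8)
The plan is to establish the two inequalities $\overline{\psi}(\mathcal{H}) \le 4n$ and $\overline{\psi}(\mathcal{H}) \ge 4n$ separately, and then combine them. The upper bound is immediate: the loop $M(D_n,2)$ has exactly $|\mathcal{V}| = 4n$ elements, so assigning a distinct color to each of the $4n$ vertices is trivially a strong coloring, since any two vertices sharing a hyperedge then automatically receive different colors. Hence at most $4n$ colors are ever needed.

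For the lower bound I would use the key structural fact that every pair of distinct vertices of $\mathcal{H}$ lies in a common hyperedge. This is essentially the content of the first theorem of Section~4 (the distance between any two vertices equals $1$), but I would give it directly as well: the identity element $(1,0)$ of $M(D_n,2)$ associates, in every position, with every ordered pair of elements. Concretely, the loop axiom $1 \cdot x = x \cdot 1 = x$ forces $(1 \cdot y)\cdot z = 1 \cdot (y \cdot z)$, $\ (x \cdot 1)\cdot z = x \cdot (1 \cdot z)$ and $(x \cdot y)\cdot 1 = x \cdot (y \cdot 1)$ for all $x,y,z \in M(D_n,2)$. Consequently, for distinct $u,v$ with $(1,0) \notin \{u,v\}$, the triple $\{u,v,(1,0)\}$ is a hyperedge, and if one of $u,v$ equals $(1,0)$ then any third vertex completes a hyperedge containing both. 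Thus every pair of distinct vertices is incident to some common hyperedge.

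Because a strong coloring must assign different colors to any two vertices sharing a hyperedge, the previous paragraph forces all $4n$ vertices to receive pairwise distinct colors, giving $\overline{\psi}(\mathcal{H}) \ge 4n$. Combining with the upper bound yields $\overline{\psi}(\mathcal{H}) = 4n$.

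I expect the only real obstacle to be making the ``every pair shares a hyperedge'' claim airtight, which is why I would anchor it to the explicit identity-associativity computation above rather than merely citing the distance-$1$ theorem; this keeps the argument self-contained and valid for all $n \ge 3$, where $4n \ge 12$ guarantees a third vertex is always available to complete the required hyperedge.
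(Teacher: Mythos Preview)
Your proof is correct and follows essentially the same approach as the paper: both arguments rest on the observation that every pair of distinct vertices lies in a common hyperedge, forcing all $4n$ vertices to receive distinct colors in any strong coloring. The paper simply asserts this pairwise-incidence fact (implicitly invoking Theorem~4.1), whereas you supply an explicit justification via the identity element $(1,0)$ and also make the trivial upper bound explicit; so your version is, if anything, more self-contained than the paper's.
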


\begin{proof}
    Suppose $n$ is even. We have $4n$ different non-isolated vertices in which any two vertices is a part of some hyperedges, so we have to color this $4n$ vertices with $4n$ different colors. Therefore,  $\overline{\psi}(\mathcal{H})=4n$

    \noindent Now, let $n$ is odd. Proof is similar to the above. Hence the strong chromatic number $\overline{\psi}(\mathcal{H})$ is also ${4n}.$
\end{proof}

 \begin{theorem}\label{matching number}      The matching number $\upsilon(\mathcal{H})$ of the associating hypergraph $\mathcal{H(V,E)}$   is $ n+ \lfloor \frac{n}{3} \rfloor.$     \end{theorem}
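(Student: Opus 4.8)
The plan is to show that the matching number attains the trivial vertex-counting upper bound and that the disjoint-hyperedge construction already produced for the covering number realizes it. First I would record the arithmetic identity $\lfloor 4n/3 \rfloor = n + \lfloor n/3 \rfloor$, valid for every integer $n$ (since $4n = 3n + n$), so that the target value is precisely $\lfloor |\mathcal{V}|/3 \rfloor$.

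For the upper bound, since $\mathcal{H(V,E)}$ is $3$-uniform with $|\mathcal{V}| = 4n$, any matching consists of pairwise vertex-disjoint hyperedges, each containing exactly three vertices. Hence a matching of size $m$ occupies $3m$ distinct vertices, forcing $3m \le 4n$, that is $m \le \lfloor 4n/3 \rfloor = n + \lfloor n/3 \rfloor$. Therefore $\upsilon(\mathcal{H}) \le n + \lfloor n/3 \rfloor$.

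For the lower bound I would reuse the family of pairwise disjoint hyperedges constructed in the proof of Theorem \ref{covering number}. When $n$ is even, that proof packs the $3n$ vertices consisting of all $(g,0)$ together with those $(g,1)$ for which $g$ is a rotation into $n$ pairwise disjoint hyperedges, and packs the remaining $n$ vertices of type $(g,1),\, g \in S$ into $\lfloor n/3 \rfloor$ further disjoint hyperedges. All $n + \lfloor n/3 \rfloor$ of these hyperedges are pairwise vertex-disjoint, so they constitute a matching, giving $\upsilon(\mathcal{H}) \ge n + \lfloor n/3 \rfloor$. The odd case is identical after interchanging the roles of the relevant vertex blocks exactly as in Theorem \ref{covering number}. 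Combining the two bounds yields $\upsilon(\mathcal{H}) = n + \lfloor n/3 \rfloor$.

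The one point needing care, and the main obstacle, is to confirm that each triple used in the packing genuinely associates, so that it is an honest hyperedge of $\mathcal{H(V,E)}$ rather than merely a disjoint triple of vertices; this is precisely the content of the associativity case analysis of Section 3 and is already discharged inside the covering-number construction, so here it suffices to point back to it. The only structural difference from the covering number is that a matching cannot absorb the one or two leftover vertices of type $(g,1),\, g \in S$ into an additional, necessarily overlapping, hyperedge, which is exactly why the floor $\lfloor n/3 \rfloor$ appears here in place of the ceiling $\lceil n/3 \rceil$ there.
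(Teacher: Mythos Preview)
Your proof is correct and follows essentially the same approach as the paper, which likewise obtains the matching by pointing back to the $n+\lfloor n/3\rfloor$ pairwise disjoint hyperedges built in the proof of Theorem~\ref{covering number}. Your version is in fact more complete: you supply the vertex-counting upper bound $\upsilon(\mathcal{H})\le \lfloor 4n/3\rfloor = n+\lfloor n/3\rfloor$ explicitly, whereas the paper's proof asserts equality directly from the existence of the matching without recording this bound.
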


\begin{proof}  Suppose $n$ is even. Since in the proof of Theorem \ref{covering number}, we can see that there is $ n+ \lfloor \frac{n}{3} \rfloor$ hyperedges that are disjoint. Therefore, matching number $\upsilon(\mathcal{H})$ of the associating hypergraph $\mathcal{H(V,E)}$  is $ n+ \lfloor \frac{n}{3} \rfloor.$  \\
\noindent Now, let $n$ is odd. Proof is similar to the above. Hence, the matching number $\upsilon(\mathcal{H})$ is also $ n+ \lfloor \frac{n}{3} \rfloor.$\end{proof}

\begin{theorem} \label{matching theorem}
    The matching polynomial $\textit{M}(\mathcal{H},w)$ of the associating hypergraph $\mathcal{H(V,E)}$ is $\displaystyle\sum_{k=0}^ {n+ \lfloor \frac{n}{3} \rfloor} a_k \textit{w}_1^{4n-3k} \textit{w}_2 ^{k}$.
\end{theorem}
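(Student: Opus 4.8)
The plan is to derive the stated form directly from the definition of the matching polynomial of a hypergraph, combined with two structural facts already established for $\mathcal{H(V,E)}$: that it is $3$-uniform with $|\mathcal{V}| = 4n$, and that its matching number is $n + \lfloor \frac{n}{3} \rfloor$ (Theorem \ref{matching number}). The theorem leaves the coefficients $a_k$ as the combinatorially-defined matching counts, so the only real content is fixing the exponents and the range of summation.

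First I would recall the definition of the matching polynomial from \cite{matching polynomial}: for a $k$-uniform hypergraph $H$ on $|V|$ vertices one sets $M(H,w) = \sum_{j} a_j\, w_1^{|V| - kj}\, w_2^{j}$, where $a_j$ is the number of matchings of $H$ consisting of exactly $j$ pairwise disjoint hyperedges (with the convention $a_0 = 1$ for the empty matching). Here the exponent $|V| - kj$ records the number of vertices left uncovered by such a matching and the exponent $j$ records the number of hyperedges used.

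Next I would specialize to $\mathcal{H(V,E)}$. Since the associating hypergraph is $3$-uniform and $|\mathcal{V}| = 4n$, a matching of size $k$ occupies exactly $3k$ vertices and leaves $4n - 3k$ vertices uncovered, so each size-$k$ matching contributes the monomial $w_1^{4n - 3k} w_2^{k}$. Grouping all matchings by their size and writing $a_k$ for the number of matchings of size $k$ yields $M(\mathcal{H},w) = \sum_{k} a_k\, w_1^{4n - 3k} w_2^{k}$.

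Finally I would pin down the range of summation. No matching can contain more than $\upsilon(\mathcal{H})$ pairwise disjoint hyperedges, so $a_k = 0$ for every $k > \upsilon(\mathcal{H})$, while a maximum matching exists, giving $a_{\upsilon(\mathcal{H})} \neq 0$. By Theorem \ref{matching number} we have $\upsilon(\mathcal{H}) = n + \lfloor \frac{n}{3} \rfloor$, so the sum runs exactly from $k = 0$ to $k = n + \lfloor \frac{n}{3} \rfloor$, which is the claimed expression. The only genuine obstacle is justifying this upper index, and that is precisely the content of Theorem \ref{matching number}; everything else is forced bookkeeping from the uniformity and the vertex count, so I would not attempt to evaluate the $a_k$ in closed form.
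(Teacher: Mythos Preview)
Your proposal is correct and follows essentially the same approach as the paper: both invoke the definition of the matching polynomial from \cite{matching polynomial}, specialize it to the $3$-uniform hypergraph on $4n$ vertices to obtain the exponents $4n-3k$ and $k$, and then use Theorem~\ref{matching number} to fix the upper limit of the sum at $n+\lfloor n/3\rfloor$. Your write-up is in fact more explicit about why each ingredient is needed, but there is no substantive difference in method.
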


 \begin{proof}
     Since $\mathcal{H(V,E)}$ is $3$-uniform hypergraph. Let $\textit{M}$ be a $k$-matching in $\mathcal{H(V,E)}$  and let us assign weights $\textit{w}_1$ and $\textit{w}_2$ to each vertices and hyperedges in $\textit{M}$ respectively. If $a_k$ is the number of $k$-matching in $\mathcal{H(V,E)}$ and from Theorem \ref{matching number}, the matching number $\upsilon(\mathcal{H})$ is the largest $k$ for which $a_k$ is non-zero is $ n+ \lfloor \frac{n}{3} \rfloor$.  Then by \cite[p.2]{matching polynomial}, the matching polynomial $\textit{M}(\mathcal{H},w)$ of the associating hypergraph $\mathcal{H(V,E)}$ is  $\displaystyle\sum_{k=0}^ {n+ \lfloor \frac{n}{3} \rfloor} a_k \textit{w}_1^{4n-3k} \textit{w}_2 ^{k}.$
 \end{proof}

\noindent \textbf{Future Scope:}  One may be intersted in classifying the loops by their associating hypergraphs. \\ By \cite[Theorem 4.4, p.774]{diassociativity}, one can observe that if three elements $x, y$ and $z$ in a conjugacy closed loop associate in an order, then they associate in any order. This implies that the associating hypergraph for conjugacy closed loop is $3$-uniform undirected hypergraph. One may be intersted to explore hypergraph theoretic properties for such loops. \\ One may also be intersted to answer the following question:\\ Given a $3$-uniform hypergraph $H,$ does there exist a loop $L$ whose associating hypergraph is $H?$\\


\noindent \textbf{Acknowledgement:} The first author is supported by junior research fellowship of CSIR, India.

\normalsize 
\end{document}